\def \r{\mathbb R}
\begin{document}

\title{Translating solitons of translation and homothetical types\thanks{Rafael L\'opez has been partially supported by the grant no. MTM2017-89677-P, MINECO/ AEI/FEDER, UE.}
}

\author{Muhittin Evren Aydin         \and
        Rafael L\'opez 
}

\institute{Muhittin Evren Aydin \at
              Department of Mathematics\\ Faculty of Science, Firat University\\ Elazig, 23200, Turkey\\
              \email{meaydin@firat.edu.tr}           
           \and
           Rafael L\'opez \at
              Departamento de Geometr\'{\i}a y Topolog\'{\i}a\\
 Universidad de Granada\\
 18071 Granada, Spain
 \email{rcamino@ugr.es}
}

\date{Received: date / Accepted: date}

\maketitle

\begin{abstract}
We prove that if a translating soliton  can be expressed as the sum of two curves and one of these curves is planar, then the other curve is also planar and consequently  the surface must be  a plane or a grim reaper. We also investigate translating solitons that can be locally written as the product of two functions of one variable. We extend the results  in Lorentz-Minkowski space.
\keywords{Mean curvature flow\and  translating soliton\and surfaces of translation\and homothetical surface}
\subclass{  53A10 \and 53C21 \and 53C42}
\end{abstract}

\section{Introduction}

Let  $\vec{v}\in\r^3$ be non-zero vector. A {\it translating soliton} in Euclidean $3$-dimensional space $\r^3$ with respect to $\vec{v}$, called the {\it velocity} of the flow, is a surface $M$ whose mean curvature $H$ satisfies
\begin{equation}\label{eq1}
H(p)=\langle N(p),\vec{v}\rangle, 
\end{equation}
for all $p\in M$, where $N$ is the unit normal vector field on $M$.   Translating solitons appear in the  theory of the mean curvature flow  of Huisken and Ilmanen as the solutions of the flow when $M$ evolves purely by translations along the direction $\vec{v}$ (\cite{hsi1,il}). In particular,    $M+t\vec{v}$, $t\in \r$, satisfies that fixed $t$,   the normal component of the velocity vector $\vec{v}$   at each point is equal to the mean curvature at that point. In non-parametric way $z=u(x,y)$, Equation \eqref{eq1} is
\begin{equation}\label{eq2}
(1+u_y^2)u_{xx}-2u_xu_y u_{xy}+(1+u_x^2)u_{yy}=2(1+u_x^2+u_y^2)(-v_1 u_x-v_2 u_y+v_3),
\end{equation}
where the subindices indicate the corresponding partial differentiation and $\vec{v}=(v_1,v_2,v_3)$. This equation is a quasilinear  elliptic PDE, so the solvability is not assured. Some  results of the solvability  of the Dirichlet problem can be proved assuming convexity in the initial data (\cite{Lopez5}). A way to reduce the complexity of \eqref{eq2} is assuming some type of symmetry on the surface which makes that \eqref{eq2} converts into an ordinary differential equation, where classical theory ensures local existence of solutions. Following this strategy, we can assume that the surface is invariant under a uniparametric group of translations (cylindrical surfaces) or rotations (surfaces of revolution). Both families of surfaces are classified and   play  a remarkable role in the theory of translating solitons. We now describe both examples. Let $(x,y,z)$ be  the canonical coordinates of $\r^3$.
\begin{enumerate}
\item Cylindrical surfaces. The translating solitons are planes parallel to the velocity vector $\vec{v}$ if the rulings are parallel to $\vec{v}$ or grim reapers otherwise. See a detailed discussion in \cite{Lopez5}. For grim reapers, we can assume without  loss of generality that $\vec{v}=(0,0,1)$. Let $w$ be the direction of the rulings. After a rotation about the $z$-axis, let $w=\cos\theta e_1+\sin\theta e_3$, where $\{e_1,e_2,e_3\}$ is the canonical basis of $\r^3$, $e_3=\vec{v}$ and $\cos\theta\not=0$. The generating curve is included in the plane spanned by $\{e_2,e\}$, where  $e=-\sin\theta e_1+\cos\theta e_3$. If we write this curve as $\beta(s)=se_2+u(s)e$, then $u$ satisfies $u''=\cos\theta(1+u'^2)$. This equation can be completely integrated. For example, if $\theta=0$, $u(s)=-\log(\cos(s+a))+b$, $a,b\in\r$.  We point out that if a translating soliton is a ruled surface, then it must be cylindrical, hence a plane or a grim reaper  (\cite{Hieu}).
\item  Surfaces of revolution. The rotation axis is not arbitrary and must be parallel to the velocity vector $\vec{v}$. There are two types of rotational translating solitons depending on whether or not the surface meets  the rotation axis (\cite{aw,css}). In the first case, the surface is known in the literature as the bowl soliton and in the second one, the surfaces have winglike shape.  
\end{enumerate}

Another way to address Equation \eqref{eq2} is by the technique of separation of variables. We have two possibilities, $u(x,y)=f(x)+g(y)$ and $u(x,y)=f(x)g(y)$, where $f$ and $g$ are smooth functions of one variable. In both cases, Equation \eqref{eq2} is an ODE where the unknowns are the functions $f=f(x)$ and $g=g(y)$. If $z=f(x)+g(y)$, the translating soliton equation \eqref{eq2} is now
 \begin{equation}\label{eq3}
(1+g'^2)f''+ (1+f'^2)g''=(1+f'^2+g'^2)(-v_1 f'-v_2 g'+v_3),
\end{equation}
where $'$ indicates the derivative with respect to the corresponding variable. In \cite{Lopez3}, the second author proved that if $\vec{v}=(0,0,1)$, grim reapers are the only solutions of \eqref{eq3}. Let us observe that the planes parallel to $\vec{v}$ are not graphs on the $xy$-plane.  A surface that is the graph of $z=f(x)+g(y)$ can be expressed as the    sum of two planar curves $\alpha(x)+\beta(y)$, where $\alpha(x)=(x,0,f(x))$ and $\beta(y)=(0,y,g(y))$. Let us observe that both curves are  contained in orthogonal planes. More generally, a surface is said to be a {\it translation surface} if it is the sum of two curves   called  {\it generating curves}. The name of translation surface is due to the surface can be viewed from the kinematic viewpoint as the translation of the curve $\alpha$ (of $\beta$) by means of translations through $\beta$ (or $\alpha$, respectively). Thus the result in  \cite{Lopez3} is only a partial answer to the following

\begin{quote}{\bf Problem 1.} Classify all translating solitons that are translation surfaces.
\end{quote}

This problem has its analogy in the classical theory of minimal surfaces of $\r^3$. Scherk proved that besides the planes, the only minimal surface that can be expressed as $z=f(x)+g(y)$ is 
$$u(x,y)=\frac{1}{c}\log\left|\frac{\cos(cy)}{\cos(c x)}\right|,$$
where $c\not=0$ (\cite{sc}). More recently, Dillen {\it et al.}  proved that if  one of the generating curves of a minimal surface of translation type is planar, then the other generating curve is also planar (\cite{Dillen}) and the surface belongs to a family of minimal surfaces discovered by Scherk (\cite{ni}). Surprisingly, very recently the second author together Hasanis and Perdomo discovered many minimal surfaces of translation type where both generating curves are not planar (\cite{lh,lp}).

In this paper we follow the same approach  for translating solitons. However,  the presence of the vector $\vec{v}$ in Equation \eqref{eq1} makes  a great difference because $\vec{v}$ is an arbitrary vector in relation with the spatial coordinates $(x,y,z)$ of  $\r^3$.  We give a partial answer to Problem 1 assuming that one of the generating curves is planar and proving that the surface is a plane or a grim reaper (Theorem \ref{t2}).   As a previous step, we prove this result in case that the two generating curves are planar curves but not necessarily contained in orthogonal planes (Theorem \ref{t1}). Both results are analogous to the minimal surfaces obtained in \cite{Dillen}. The goal of all both theorems is that  we do not presuppose any relation between the velocity vector $\vec{v}$ and the surface. More precisely, the notion of translation surface is affine because we use the sum of vectors of $\r^3$.  However the velocity vector $\vec{v}$ in the translating soliton equation \eqref{eq1} is assumed in all its generality without any relation with the coordinates of $\r^3$. This should be pointed out because one may be tempted to fix $\vec{v}$ since Equation \eqref{eq1} is invariant after a rigid motion.   However, such rigid motion also changes the spatial coordinates of $\r^3$. This seems to be subtle, but if one assumes that the surface is $z=f(x)+g(y)$, then the vector $\vec{v}$ must be arbitrary. All this complicates the demonstrations, which are not straightforward.
 
  The second case of separation of variables that we investigate for the translating soliton equation is $z=f(x)g(y)$. Then \eqref{eq2} is 
   \begin{equation}\label{eqh1}
 (1+f^2g'^2)gf''-2fgf'^2g'^2+(1+g^2f'^2)fg''-2(1+f'^2g^2+f^2g'^2)(-v_1f'g-v_2fg'+v_3)=0.
 \end{equation}
 Let us observe the symmetry of \eqref{eqh1} in terms of $f$ or $g$, hence any discussion on one of both functions also holds for the other one.     As far as the authors  know,  the first approach to this kind of surfaces in relation to the study of the curvature of surfaces appeared in \cite{wo1,wo2}, where the authors coined this type of surfaces as {\it homothetical surfaces} (see also \cite{GV,LM}).   We have the analogous question. 

\begin{quote}{\bf Problem 2.} Classify all translating solitons of homothetical type.
\end{quote}

It was proved in \cite{Lopez3}, and in the particular case $\vec{v}=(0,0,1)$, that the only homothetical translating solitons are grim reapers. Grim reapers appear when one of the functions $f$ or $g$ are constant. Indeed, if  say $f(x)=a$, $a\in\r$, then the parametrization of the surface is $X(x,y)=(x,y,a g(y))$ deducing that the surface is cylindrical and the rulings are parallel to the vector $e_1$ of the canonical basis. In contrast to Equation \eqref{eq3}, now Equation \eqref{eqh1} is more difficult to work. The result that we prove is assuming that $\vec{v}$ is one of the canonical directions of $\r^3$ and proving that the surface is a plane or a grim reaper (Theorem \ref{t3}). Again we can make the same observation as before and although this seems elementary analysis and would yield no non-trivial solutions besides cylindrical surfaces, one can expect the existence of new examples. For instance, in the family of minimal surfaces, the plane and the helicoid (which is not cylindrical but ruled) are the only homothetical surfaces (\cite{wo1}). But if one replaces $z=f(x)g(y)$ by $h(z)=f(x)g(y)$, then there are many minimal surfaces (\cite{ni}). 

Finally in Section \ref{s4} we extend all the above results for translating solitons in Lorentz-Minkowski space $\r_1^3$. Since the underlying affine space for $\r_1^3$ coincides with  the Euclidean space, the concepts of translation surfaces and homothetical surfaces are equally valid in the Lorentzian setting. The results are analogous to that of Euclidean space.

\section{Translating solitons of translation type}

Consider a translation surface where the generating curves are planar curves.   If the planes containing the generating curves are orthogonal and  $\vec{v}$ is parallel to both planes,   the second author proved that the only translating solitons are grim reapers whose rulings are parallel to one of the above planes (\cite{Lopez5}).  We now investigate Problem 1 in case that $\vec{v}$ is arbitrary and the generating curves are planar but not necessarily containing in orthogonal planes.

\begin{theorem}\label{t1} Planes and grim reapers are the only translating solitons that are the sum of  two planar curves.  
\end{theorem}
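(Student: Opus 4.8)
The plan is to write the surface as $X(x,y)=\alpha(x)+\beta(y)$ with $\alpha,\beta$ planar and parametrized by arc length, and to reduce the soliton equation to a single scalar identity that I can attack by a Fourier/operator argument. First I would dispose of the degenerate configurations. If the two planes containing $\alpha$ and $\beta$ are parallel, then $\alpha'$ and $\beta'$ are everywhere orthogonal to a fixed vector, so $N$ has constant direction and the image lies in one plane; the surface is a plane. If one generating curve is a straight line, say $\alpha''\equiv0$, then $X$ is a cylinder, hence a \emph{ruled} translating soliton, which by \cite{Hieu} is a plane or a grim reaper. So the whole content is the remaining case: the planes meet along a line and both curvatures are not identically zero, and here I aim for a contradiction. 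After a rigid motion (which only replaces $\vec v$ by another, still arbitrary, vector) I may take the line of intersection to be the $z$-axis, write the plane directions as $\vec u_i=(\cos\theta_i,\sin\theta_i,0)$ with unit normals $\vec a,\vec b$ in the $xy$-plane, and set $\alpha'=\cos\phi\,\vec u_1+\sin\phi\, e_3$ and $\beta'=\cos\rho\,\vec u_2+\sin\rho\, e_3$, so that $\phi'$ and $\rho'$ are the signed curvatures.

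Next I would compute the soliton equation in this frame. Since $X_{xy}=0$, the equation $H=\langle N,\vec v\rangle$ reads $eG+gE=2(EG-F^2)\langle N,\vec v\rangle$ with $E=G=1$ and $F=\langle\alpha',\beta'\rangle$. The crucial simplification comes from planarity: because $\alpha',\alpha''$ lie in the first plane, $\alpha''\times\alpha'$ is a multiple of $\vec a$, and likewise $\beta'\times\beta''$ is a multiple of $\vec b$, so the triple products defining $e,g$ collapse to $e\propto\phi'\langle\vec a,\beta'\rangle$ and $g\propto\rho'\langle\vec b,\alpha'\rangle$. Writing $\sigma=\sin(\theta_2-\theta_1)\neq0$ and $A=\langle\vec a,\vec v\rangle$, $B=\langle\vec b,\vec v\rangle$, $C=\langle e_3,\vec v\rangle$, I obtain
\[
\sigma\big(\phi'\cos\rho+\rho'\cos\phi\big)=2(1-F^2)\,\big(\sigma C\cos\phi\cos\rho+A\cos\phi\sin\rho-B\sin\phi\cos\rho\big),
\]
with $F=\cos(\theta_2-\theta_1)\cos\phi\cos\rho+\sin\phi\sin\rho$.

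The hard part is that $\vec v$ is arbitrary and the right-hand side couples $x$ and $y$ through $F$, so naive separation of variables fails. My key device is to pass to $\phi$ and $\rho$ as independent coordinates, which is legitimate precisely because both curvatures are nonzero on an open rectangle (and the surface, solving an elliptic equation, is analytic, so a curvature vanishing on a set with a limit point would vanish identically, returning us to the cylinder case). In these coordinates $\phi'=\Phi(\phi)$ and $\rho'=R(\rho)$ are unknown one-variable functions, but the left-hand side has the special shape $\Phi(\phi)\cos\rho+R(\rho)\cos\phi$, which is annihilated by the operator $L=(\partial_\phi^2+1)(\partial_\rho^2+1)$ \emph{regardless} of $\Phi,R$. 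Applying $L$ therefore removes the left-hand side and leaves a condition on the explicit trigonometric polynomial on the right alone. In Fourier terms $L$ scales $e^{im\phi+in\rho}$ by $(1-m^2)(1-n^2)$, so it kills exactly the modes with $|m|=1$ or $|n|=1$; since $(1-F^2)$ times the bracket has frequencies in $\{\pm1,\pm3\}\times\{\pm1,\pm3\}$, the equation $L(\mathrm{RHS})=0$ reduces to the vanishing of the top harmonics $e^{\pm3i\phi\pm3i\rho}$.

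Matching the coefficients of $e^{3i\phi}e^{3i\rho}$ and $e^{3i\phi}e^{-3i\rho}$ should yield $\sigma C=0$ together with $B=A$ and $B=-A$, forcing $A=B=C=0$. As $\{\vec a,\vec b,e_3\}$ is a basis when the planes meet transversally, this gives $\vec v=0$, contradicting the definition of a translating soliton. Hence both curvatures cannot be simultaneously nonzero: one generating curve is a line, the surface is a cylinder, and \cite{Hieu} closes the argument. I expect the main obstacle to be organizing the triple-product computation cleanly and rigorously justifying the change to $(\phi,\rho)$-coordinates; once the annihilator $L$ is available, the very arbitrariness of $\vec v$ that makes the problem delicate is exactly what forces the contradiction.
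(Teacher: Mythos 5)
Your proposal is correct, and it takes a genuinely different route from the paper. The paper normalizes the two planes as $x=0$ and $cx+y=0$, writes the surface as $X(x,y)=(x,y-cx,f(x)+g(y))$, divides the soliton equation by $(1+g'^2)(1+c^2+f'^2)$ and applies $\partial_x\partial_y$ to kill the separated left-hand side; what survives is a polynomial identity $\sum_{n=0}^{4}P_n(y)f'(x)^n=0$, and the vanishing of $P_4,P_2,P_1$ forces $\vec v=0$. Your annihilator $L=(\partial_\phi^2+1)(\partial_\rho^2+1)$ plays exactly the role of that mixed derivative, but in arc-length/angle coordinates, and the contradiction is concentrated in the two top harmonics instead of a case analysis on the $P_n$. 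I verified the key computation: with $F=k\cos\phi\cos\rho+\sin\phi\sin\rho$ and $k=\cos(\theta_2-\theta_1)$, the $(3,3)$ and $(3,-3)$ Fourier coefficients of $F^2$ times the bracket are, up to nonzero factors, $(k-1)^2\bigl(\sigma C-i(A-B)\bigr)$ and $(k+1)^2\bigl(\sigma C+i(A+B)\bigr)$; transversality gives $k\neq\pm1$ and $\sigma\neq0$, so these do yield $A=B=C=0$, hence $\vec v=0$ since $\{\vec a,\vec b,e_3\}$ is a basis. What your route buys: the arc-length frame is always available (the paper's parametrization $\beta(y)=(0,y,g(y))$ tacitly assumes the tangent is never parallel to the intersection line of the two planes), and the operator $L$ makes transparent why the arbitrariness of $\vec v$ is exactly what kills the doubly-curved case. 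What the paper's route buys: it stays in the graph coordinates that are reused verbatim in the harder Theorem 2, and it needs no change of independent variable, hence no discussion of where $\phi'$ and $\rho'$ are invertible. Two minor points to tighten in a write-up: the appeal to analyticity is dispensable, since if neither curvature vanishes identically there is already an open rectangle on which both are nonzero, which is all your argument uses; and you should state explicitly that a trigonometric polynomial vanishing on an open rectangle vanishes identically, which is what licenses reading off Fourier coefficients from a merely local identity.
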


\begin{proof} If the planes containing the curves are parallel then the sum of the two curves is (part of) a plane.  Suppose now that both planes are not parallel. After renaming coordinates, we will assume that the $z$-axis is the common straight line of the two planes, one of the generating curves is included in the plane of equation $x=0$ and the other in the plane $cx+y=0$, $c\in\r$. The cosine
of angle between the two planes is $c/\sqrt{1+c^{2}}$ and if $c=0$
then both planes become perpendicular. The first curve parametrizes as $\beta(y)=(0,y,g(y))$ and the second one by $\alpha(x)=(x,-cx,f(x))$, where $f$ and $g$ are two smooth functions defined in  intervals $I$ and $J$, respectively. Thus a parametrization of the surface is 
\[X(x,y)=\alpha(x)+\beta(y)=(x,y-cx,f(x)+g(y)).\]
Notice that if we name $\tilde{y}=y-cx$, then the surface is $z=f(x)+g(\tilde{y}+cx)$. These surfaces are known in the literature as  affine translation surfaces (\cite{Liu2}).

In case that $f$ or $g$ is a linear function, then the surface is cylindrical and the surface must be a plane or a grim reaper, proving the result.  Now we discard this case. Then there are $x_0\in I$ and $y_0\in J$ such that $f''(x_0)\not=0$ and $g''(y_0)\not=0$. Then $f''\not=0$ and $g''\not=0$ in some subintervals around $x_0$ and $y_0$ respectively, which we can assume to be $I$ and $J$. In both intervals, there are points where $f'\not=0$ and $g'\not=0$, otherwise $f$ or $g$ would be constant functions. Abusing of notation, suppose $f'f''(x_0)\not=0$ and $g'g''(y_0)\not=0$ and analogously, $f'f''\not=0$ in $I$ and $g'g''\not=0$ in $J$. If $\vec{v}=(v_1,v_2,v_3)$, Equation \eqref{eq2} writes as 
$$
\left( 1+g^{\prime
2}\right) f^{\prime \prime }+\left( 1+c^{2}+f^{\prime 2}\right) g^{\prime \prime }=  
2\left( -v_1 \left( f^{\prime }+cg^{\prime }\right) -v_2 g^{\prime
}+v_3 \right) \left( 1+g^{\prime }{}^{2}+\left( f^{\prime }+cg^{\prime
}\right) ^{2}\right).$$
Divide by $(1+g'^2)(1+c^2+f'^2)$,
$$
\frac{ f^{\prime \prime }}{1+c^2+f'^2}+\frac{ g^{\prime \prime }}{1+g'^2}=  
2\left( -v_1 \left( f^{\prime }+cg^{\prime }\right) -v_2 g^{\prime
}+v_3 \right)\frac{ 1+g^{\prime }{}^{2}+\left( f^{\prime }+cg^{\prime
}\right) ^{2}}{(1+g'^2)(1+c^2+f'^2)}.$$

Because the left hand side is the sum of a function on the variable $x$ and a function on the variable $y$, when we differentiate with respect to $x$ and next with respect to $y$, these terms are zero.  The corresponding differentiations on the right hand side give the expression
$$\left(\sum_{n=0}^4 P_n(y){f'}^n\right)\frac{f''g''}{(1+g'^2)^2(1+c^2+f'^2)^2}=0,$$
where $P_n$ are functions on the variable $y$.  Thus $\sum_{n=0}^4 P_n(y)f'(x)^n=0$ in $I\times J$. Since this is a polynomial of the function $f'=f'(x)$, all coefficients $P_n$ must vanish in $J$. The computation of $P_4$ yields
 $P_4=-v_1 g'$, deducing $v_1=0$ because $g'\not=0$. Taking into account that $v_1=0$, the computation of $P_2$ gives  
 $$P_2=c (-v_3{g'}^2-2v_2g'+v_3).$$
 We discuss two cases:
 \begin{enumerate}
 \item Case $c=0$. Then all $P_n$ are trivially $0$ except $P_1$, which is $P_1=-g'(v_2g'^3+3v_2g'-2v_3)$. From $P_1=0$ and because $g'\not=0$, we have $v_2g'^3+3v_2g'-2v_3=0$. Since $g''\not=0$, the functions $\{1,g',g'^3\}$ are linearly independent, concluding  $v_2=v_3=0$, so  $\vec{v}=0$ obtaining a contradiction. 
 \item Case $c\not=0$. Then $-v_3g'^2-2v_2g'+v_3=0$.  Thus $v_2=v_3=0$ again, which is contradictory. 
 
 \end{enumerate}
 \end{proof}
 
 We point out that in \cite{yoon} the authors obtained a partial result of Theorem \ref{t1} in case that $\vec{v}$ is one vector of the canonical basis.

Our next progress in Problem 1 is considering that one of the generating curves is non-planar.
\begin{theorem}\label{t2}
 Planes and grim reapers are the only translating solitons that are the sum of two curves and where one of the generating curves is planar. 
\end{theorem}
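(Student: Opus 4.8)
The plan is to reduce Theorem~\ref{t2} to Theorem~\ref{t1} by proving that the second, a priori non-planar, generating curve is forced to be planar, after which a short piece of casework identifies the surface as a plane or a grim reaper. Write the surface as a translation surface $X(u,v)=\alpha(u)+\beta(v)$, with $\beta$ the planar curve. Since the notion of translation surface is affine and, as emphasized in the Introduction, $\vec{v}$ may be kept arbitrary, I would apply a rigid motion so that $\beta$ lies in the plane $z=0$, say $\beta(v)=(\phi(v),\psi(v),0)$, leaving $\vec{v}=(v_1,v_2,v_3)$ general. If $\alpha_3'\equiv 0$ then $\alpha$ lies in a plane parallel to $\{z=0\}$ and the surface is a plane; discarding also the cylindrical case in which $\alpha$ or $\beta$ is a line (planes and grim reapers by the Introduction), I may work on a subinterval where $\alpha_3'\neq0$ and parametrize $\alpha$ by its third coordinate, $\alpha(u)=(A(u),B(u),u)$.

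The decisive simplification comes from $X_{uv}=0$: the coefficient $M$ of the second fundamental form vanishes, so the translating soliton equation \eqref{eq1} becomes a single scalar relation. Writing $N$ and $H$ in terms of $\alpha',\alpha'',\beta',\beta''$ and clearing denominators, I expect it to take the separated form
\[
E(u)\,p(v)=\sum_{i+j=3}c_{ij}(u)\,\phi'(v)^{i}\psi'(v)^{j},
\]
where $E=1+A'^2+B'^2$, $\;p=\phi'\psi''-\phi''\psi'$, and each $c_{ij}$ is an explicit expression, affine in $A'',B''$ and polynomial in $A',B'$, also involving the $v_k$. The key point is that the right-hand side is \emph{homogeneous of degree three} in $(\phi',\psi')$: both the mean-curvature numerator and the term $\langle N,\vec{v}\rangle$ assemble into products of the quadratic $\phi'^2+\psi'^2$ and of $r=A'\psi'-B'\phi'$ with a single linear factor, while the $\alpha''$--contribution $(\phi'^2+\psi'^2)(\phi'B''-\psi'A'')$ is likewise cubic.

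Now I would separate variables. Since $\beta$ is not a line, its tangent direction $(\phi':\psi')$ is non-constant, so the four functions $\phi'^3,\phi'^2\psi',\phi'\psi'^2,\psi'^3$ are linearly independent on the subinterval (a nontrivial homogeneous cubic vanishing along the curve would confine $(\phi':\psi')$ to finitely many fixed directions, hence, by continuity, force $\beta$ to be a line). Evaluating the displayed identity at one value of $u$ shows $p$ lies in their span, and comparing coefficients then yields $c_{ij}(u)=\lambda_{ij}E(u)$ for constants $\lambda_{ij}$. This produces four ordinary differential relations for $A,B$. In them $A''$ occurs only in the equations indexed $(2,1)$ and $(0,3)$, and $B''$ only in those indexed $(3,0)$ and $(1,2)$; subtracting the appropriate pairs eliminates the second derivatives and leaves two polynomial constraints $\Pi_1(A',B')=\Pi_2(A',B')=0$, holding identically in $u$.

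The heart of the argument, and the step I expect to be the main obstacle, is the analysis of this over-determined system with $\vec{v}$ arbitrary. Generically $\Pi_1=\Pi_2=0$ has only finitely many common solutions, so $(A',B')$ is constant, $\alpha$ is a straight line, and the surface is cylindrical, hence a plane or a grim reaper. The difficulty is the casework when $\Pi_1$ and $\Pi_2$ share a common component: this can occur only for special values of the $\lambda_{ij}$ and the $v_k$, and each configuration must be examined separately to show that it either forces $A''=\kappa B''$ for a constant $\kappa$ (so that $\alpha$ is planar and Theorem~\ref{t1} applies directly) or contradicts $\vec{v}\neq 0$. Carrying out this finite but delicate analysis, driven precisely by the freedom in $\vec{v}$, is where the real work lies; once it is complete, the conclusion that the surface is a plane or a grim reaper follows.
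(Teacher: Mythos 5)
Your setup is sound and runs parallel to the paper's: the paper also normalizes the planar generator into a coordinate plane (it uses $x=0$ and writes the non-planar curve as $\alpha(x)=(x,f(x),h(x))$), exploits $X_{uv}=0$, divides by $Q=1+f'^2+h'^2$, and uses linear independence of powers of the tangent of the planar curve to obtain that each coefficient is a constant multiple of $Q$ --- exactly your $c_{ij}=\lambda_{ij}E$, which is the paper's system \eqref{pp}. Your homogeneity observation and the independence of $\{\phi'^3,\phi'^2\psi',\phi'\psi'^2,\psi'^3\}$ along a non-straight planar curve are correct. But the proposal stops precisely where the proof begins: you write that the analysis of the over-determined system for $(A',B')$ ``is where the real work lies'' and do not carry it out. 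That analysis is not a routine genericity check; it occupies essentially the entire published proof (the dichotomy on the component of $\vec v$ normal/tangent to the plane of $\beta$, four subcases, the two Claims on linear independence of $\{1,f'^2,h'^2\}$ and $\{1,f'^2,f'h'\}$, and explicit leading coefficients such as $A_9=-512$ and $A_{12}=2^{16}3^3v_1^8$ that furnish the contradictions). Asserting that ``once it is complete, the conclusion follows'' is not a proof of the theorem.

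There is also a structural weakness in how you propose to organize that missing step. Eliminating $A''$ and $B''$ leaves only the two algebraic constraints $\Pi_1=\Pi_2=0$, and these do force $(A',B')$ constant when the component of $\vec v$ orthogonal to the plane of $\beta$ is nonzero (their leading cubic forms are then proportional to $\mathrm{Im}\bigl((A'+iB')^3\bigr)$ and $\mathrm{Re}\bigl((A'+iB')^3\bigr)$, which share no real linear factor). But when that component vanishes, $\Pi_1$ and $\Pi_2$ degenerate to quadrics that can be proportional or share a conic component, and then the two discarded relations --- the ones still containing $A''$ and $B''$ --- must be brought back in and combined with the derivative of the algebraic constraint; this is exactly the role of the paper's equations \eqref{pp} together with its Claims 1 and 2. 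Note also that a common \emph{linear} component $B'=kA'+m$ makes $\alpha$ planar, which is not itself a contradiction but a reduction to Theorem \ref{t1}; your casework must separate that outcome from the genuinely impossible ones. Until these degenerate configurations are worked through for arbitrary $\vec v\neq 0$, the argument has a genuine gap.
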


\begin{proof} Suppose that the surface is parametrized by $X(s,t)=\alpha(s)+\beta(t)$, where $\beta$ is a planar curve. Without loss of generality, we assume that $\beta$  is contained in the plane $\Pi$ of equation $x=0$ and that $\beta$ parametrizes as $\beta(y)=(0,y,g(y))$, where $g$ is a smooth function defined in an interval $J$.   The proof of theorem is by contradiction so by Theorem \ref{t1}, we suppose that  the curve $\alpha$ is not planar.  Since $\alpha$ is a space curve, then $\alpha$ is a graph on one of the coordinates axes. We can assume that this axis is the $x$-axis because otherwise, the curve $\alpha$ would be contained in a plane parallel to $\Pi$ and the sum of $\alpha$ and $\beta$ would be (part of) a plane. Definitively, $\alpha$ can be expressed as  $\alpha(x)=(x,f(x),h(x))$, where $f$ and $h$ are two smooth functions defined in an interval $I\subset\r$. If we parametrize the surface by  $X(x,y)=(x,y+f(x), h(x)+g(y))$, the unit normal vector field is 
\begin{equation*}
N=\frac{1}{\sqrt{1+g^{\prime 2}+\left( f^{\prime }g^{\prime }-h^{\prime
}\right) ^{2}}}\left( f^{\prime }g^{\prime }-h^{\prime },-g^{\prime
},1\right) 
\end{equation*}
and the mean curvature $H$ is 
\begin{equation*}
H=\frac{\left( h^{\prime \prime }-f^{\prime \prime }g^{\prime }\right)
\left( 1+g^{\prime 2}\right) +\left( 1+f^{\prime 2}+h^{\prime 2}\right)
g^{\prime \prime }}{2\left( 1+g^{\prime 2}+\left( f^{\prime }g^{\prime
}-h^{\prime }\right) ^{2}\right) ^{3/2}}.
\end{equation*}%
Let $\vec{v}=(v_1,v_2,v_3)$. The translating soliton equation \eqref{eq2} is 
\begin{equation}\label{eq22}
\left( h^{\prime \prime }-f^{\prime \prime }g^{\prime }\right) \left(
1+g^{\prime 2}\right) +\left( 1+f^{\prime 2}+h^{\prime 2}\right) g^{\prime
\prime }  
=2\left(  v_1(f'g'-h')-v_{2}  g^{\prime } +v_{3}\right) \left( 1+g^{\prime 2}+\left( f^{\prime }g^{\prime }-h^{\prime
}\right) ^{2}\right). \end{equation}
If $f$ or $h$ are linear functions, then the generating curve $\alpha$ is planar, which is not possible. Therefore, with a similar argument as in the beginning of the proof of Theorem \ref{t1}, we can assume that in some subintervals  of $I$ and $J$,   we have $f'f''h'h''\not =0$  and $g'g''\not=0$. Without loss of generality we will assume that these subintervals are $I$ and $J$ again.  Our arguments will use the next two  claims

{\it Claim 1.} If there are $a,b,c\in\r$ such that $a+bf'(x)^2+ch'(x)^2=0$ for all $x\in I$, then either $abc\not=0$ or $a=b=c=0$. In the first case, we conclude that $h'(x)=\pm\sqrt{m_0+m_1f'(x)^2}$, where $m_0,m_1\not=0$, $m_0,m_1\in\r$.

The proof of the claim is as follows. According to the value of the constant $b$, we have two cases. If $b=0$, then $a+ch'^2=0$. In case that $c=0$, then $a=0$ and the claim is proved. If $c\not=0$, we deduce that $h'h''=0$, which is not possible. The other case is $b\not=0$. With a similar argument, we deduce $c\not=0$. If $a=0$, then $bf'(x)^2+ch'(x)^2=0$ for all $x\in I$, in particular, $bc<0$. Then $h'(x)=\pm\sqrt{-b/c}f'(x)$ so $h(x)=\pm\sqrt{-b/c}f(x)+m$, $m\in\r$. Thus $\alpha(x)=(x,f(x),\pm\sqrt{-b/c}f(x)+m)$ concluding that $\alpha$ is planar, which is contradictory. Thus $a\not=0$. Hence $h'^2=-a/c-b/cf'^2$ and the result follows by taking $m_0=-a/c$ and $m_1=-b/c$.
 
{\it Claim 2.} Suppose $h^{\prime
}=\pm \sqrt{m_{0}+m_{1}f^{\prime 2}}$, where $m_{0},m_{1}\not=0$.  Then the functions  $\left\{ 1,f^{\prime 2},f^{\prime }h^{\prime
}\right\} $ are linearly independent.

The proof is the following. Since $f^{\prime \prime }\neq 0$, let us introduce $s=f^{\prime }$.  Then the Wronskian of the set $\left\{
1,s^{2},\pm s\sqrt{m_{0}+m_{1}s^{2}}\right\} $ is $\mp \frac{2m_{0}^{2}}{%
\left( m_{0}+m_{1}s^{2}\right) ^{3/2}},$ and this proves the claim.

We come back to the proof of theorem. Dividing \eqref{eq22} by $Q=1+f^{\prime 2}+h^{\prime 2}$ and differentiating with respect to $x$, we obtain  a polynomial equation on $g'$
\begin{equation*}
\sum_{n=0}^{3}P_{n}(x) g^{\prime n}=0,
\end{equation*}%
where 
\begin{equation*}
\begin{split}
P_{0}(x)&=\left( \frac{h''-2(v_3-v_1h')(1+h'^2) }{Q  }\right)' \\
P_{1}(x)  &=\left( \frac{-f''-2 f' \left(3 v_1 h'^2-2 v_3 h'+v_1\right)+2 v_2 h'^2+2 v_2 }{Q}\right)'\\
P_{2}(x)  &=\left( \frac{h''-2 f'^2 \left(v_3-3 v_1 h'\right)-4 v_2 f' h'+2 v_1 h'-2 v_3}{Q}\right)' \\
P_{3}(x)  &=\left( \frac{-f''+2(v_2-v_1f')(1+f'^2)}{Q}\right)'.
\end{split}
\end{equation*}
Thus there are real constants $p_n\in\r$, $0\leq n\leq 3$, such that
\begin{equation}\label{pp}
\begin{split}
 h''-2(v_3-v_1h')(1+h'^2)&=p_0 Q \\
 -f''-2 f' \left(3 v_1 h'^2-2 v_3 h'+v_1\right)+2 v_2 h'^2+2 v_2 &=p_1 Q\\
 h''-2 f'^2 \left(v_3-3 v_1 h'\right)-4 v_2 f' h'+2 v_1 h'-2 v_3&=p_2 Q \\
 -f''+2(v_2-v_1f')(1+f'^2)&=p_3 Q.
\end{split}
\end{equation}
In order to simplify the notation, set 
$$c_1=p_0-p_2,\quad c_2=p_1-p_3.$$
We distinguish two cases.
\begin{enumerate}
\item Case $v_3=0$. There are two subcases.
\begin{enumerate}
\item Subcase $v_1=0$. After a dilation of $\r^3$, we can assume $\vec{v}=(0,1,0)$. Equations \eqref{pp} are
\begin{eqnarray}
h''&=&p_0 Q, \label{p0}\\
-f''+2(1+h'^2)&=&p_1 Q, \label{p1}\\
h''-4f'h'&=& p_2Q, \label{p2}\\
-f''+2(1+f'^2)&=&p_3 Q. \label{p3}
 \end{eqnarray}
 Combining \eqref{p1} and \eqref{p3} and using the value of $Q$, we have
 $$c_1+(c_1+2)f'^2+(c_1-2)h'^2=0.$$
 Because the three coefficients are distinct from $0$, Claim 1 implies that $h'= \pm\sqrt{m_0+m_1f'^2}$, where $m_0,m_1\not=0$. Using now \eqref{p0} and \eqref{p2}, $4f'h'=c_1Q$, or equivalently, 
 $$c_2+m_0c_2+(c_2+c_2m_1) f'^2 \mp 4f'h'=0.$$
From Claim 2, the functions $\{1,f'^2,f'h'\}$ are linearly independent, hence the coefficients must vanish,  obtaining a contradiction.
 
 \item Subcase $v_1\not=0$. Since  $\vec{v}=(v_1,v_2,0)$, after a dilation of $\r^3$, we can assume that $\vec{v}=(1,v_2,0)$. Now \eqref{pp} is
 \begin{eqnarray}
h''+2h'(1+h'^2)&=&p_0 Q,\label{p00}\\
-f''+2v_2(1+h'^2)-2f'(1+3h'^2)&=&p_1 Q,\label{p01}\\
h''+(2-4v_2 f'+6f'^2)h'&=& p_2Q,\label{p02}\\
-f''+2(v_2-f')(1+f'^2)&=&p_3 Q\label{p03}.
 \end{eqnarray}
 Combining  \eqref{p00} and \eqref{p02},
 \begin{equation}\label{mm}
 2h'(2v_2f'-3f'^2+h'^2)=c_1Q=c_1(1+f'^2+h'^2).
 \end{equation}
 Hence we can get the expression   
 $$h'^2=\frac{-c_1(1+f'^2)+4v_2f'h'-6h'f'^2}{c_1-2h'}.$$
 From \eqref{p01} and \eqref{p03}, we have 
$$
 -2v_2f'^2+2f'^3+(2v_2-6f')h'^2=c_2 Q=c_2(1+f'^2+h'^2).$$
 Substituting the above value of $h'^2$, 
 $$(-4 c_1 f'^3+2 c_1 v_2 f'^2-3 c_1 f'+c_1 v_2)+h'(2 c_2 v_2 f' -4 c_2 f'^2 -c_2 -4 v_2^2 f' +16 v_2 f'^2 -16 f'^3)=0.$$
 In this polynomial equation on $h'$ of degree $\leq 1$, if the coefficient of $h'$ is $0$, then $-4 c_1 f'^3+2 c_1 v_2 f'^2-3 c_1 f'+c_1 v_2=0$. This is  a polynomial on $f'$ and the leading coefficient of $f'^3$ is not $0$, which is not possible. Thus the coefficient of $h'$ is not $0$, obtaining
 $$h'=-\frac{2 c_1 v_2 f'^2-4 c_1 f'^3-3 c_1 f'+c_1 v_2}{2 c_2 v_2 f' -4 c_2 f'^2 -c_2 -4 v_2^2 f' +16 v_2 f'^2 -16 f'^3}.$$
 Substituting into \eqref{mm}, and after some manipulations, we have an expression of type 
 $$\sum_{n=0}^9 A_n f'(x)^n=0,$$
 where $A_n$ are real constants.  Thus all coefficients $A_n$ must vanish. However, the computation of  $A_9$ gives $A_9=-512$. This completes the proof for the case $v_3=0$.  
 
\end{enumerate}

\item Case $v_3\not=0$. After a dilation, we suppose $\vec{v}=(v_1,v_2,1)$. We distinguish four subcases.  
\begin{enumerate}
\item  Subcase $v_{1}=v_2=0$.   Then \eqref{pp} is%
\begin{eqnarray}
h^{\prime \prime }-2\left( 1+h^{\prime 2}\right) &=&p_{0}Q  \label{p10} \\
-f^{\prime \prime }+4f^{\prime }h^{\prime } &=&p_{1}Q  \label{p11} \\
h^{\prime \prime }-2f^{\prime 2}-2 &=&p_{2}Q \label{p12} \\
-f^{\prime \prime } &=&p_{3}Q. \label{p13}
\end{eqnarray}
We   deduce from \eqref{p10} and \eqref{p12} that
$c_1Q=2f^{\prime 2}-2h^{\prime 2}$ or equivalently, by  $Q=1+f'^2+h'^2$, 
\begin{equation*}
c_{1}+\left( c_{1}-2\right) f^{\prime 2}+\left( c_{1}+2\right) h^{\prime
2}=0.
\end{equation*}%
Clearly  the coefficients are
nonzero and Claim 1 implies $h^{\prime }=\pm \sqrt{m_{0}+m_{1}f^{\prime 2}%
}$, $m_0, m_1\not=0$. Moreover from \eqref{p11} and \eqref{p13}, we deduce $c_2
Q=4f^{\prime }h^{\prime },$  and substituting $h'^2=m_0+m_1 f'^2$,  
\begin{equation*}
c_2 \left( 1+m_{0}\right) +c_2\left( 1+m_{1}\right) f^{\prime 2}\mp 4f^{\prime }h'=0,
\end{equation*}%
which gives a contradiction from Claim 2.
\item  Subcase  $v_{1}=0$ and $v_{2}\neq 0$.  Then \eqref{pp} is%
\begin{eqnarray}
h^{\prime \prime }-2\left( 1+h^{\prime 2}\right) &=&p_{0}Q  \label{p20} \\
-f^{\prime \prime }+4f^{\prime }h^{\prime } +2v_2(1+h'^2)&=&p_{1}Q  \label{p21} \\
h^{\prime \prime }-2f^{\prime 2}-4v_{2}f^{\prime }h^{\prime }-2 &=&p_{2}Q 
\label{p22} \\
-f^{\prime \prime }+2v_{2}\left( 1+f^{\prime 2}\right) &=&p_{3}Q. 
\label{p23}
\end{eqnarray}%
It follows from \eqref{p20} and \eqref{p22} that 
\begin{equation} \label{p24} 
c_1 Q=2f^{\prime 2}-2h^{\prime 2}+4v_{2}f^{\prime
}h^{\prime }.
\end{equation}
Similarly from \eqref{p21} and \eqref{p23}, 
\begin{equation}\label{p25} 
c_2 Q=4f^{\prime }h^{\prime }+2v_{2}(h'^2-f'^2).
\end{equation}%
Combining \eqref{p24} and \eqref{p25}, we deduce 
\begin{equation*}
2  ( 1+v_{2}^{2}) (f^{\prime 2}-h^{\prime
2})=\left(c_1 -c_2 v_{2}  \right)
Q=\left(c_1 -c_2 v_{2}  \right)(1+f'^2+h'^2). 
\end{equation*}%
Let $c_{3}=p_{0}-p_{2}-v_{2}\left( p_{1}-p_{3}\right) $. This equation writes as 
\begin{equation*}
c_{3}+(c_3-2(1+v_2^2))f'^2 +(c_3+2(1+v_2^2))h'^2=0,
\end{equation*}%
where the coefficients of $\{1,f'^2,h'^2\}$ are clearly nonzero. Then Claim 1 implies $h^{\prime
}=\pm \sqrt{m_{0}+m_{1}f^{\prime 2}}$, $m_0, m_1\not=0$.  Coming back to \eqref{p24}, we derive%
\begin{equation*}
4v_2f^{\prime }h^{\prime }=c_1 \left( 1+m_{0}\right)+2m_0+
 \left(c_1 \left( 1+m_{1}\right)
+2m_1-2\right) f^{\prime 2}.
\end{equation*}%
Claim 2 concludes that this subcase is false because $v_2\not=0$

\item  Subcase  $v_{1}\neq 0$ and $v_{2}=0$.  Then \eqref{pp} is%
\begin{eqnarray}
h^{\prime \prime }-2\left( 1-v_{1}h^{\prime }\right) \left( 1+h^{\prime
2}\right) &=&p_{0}Q \label{p30}\\
-f^{\prime \prime }-2f^{\prime }\left( 3v_{1}h^{\prime 2}-2h^{\prime
}+v_{1}\right) &=&p_{1}Q  \label{p31} \\
h^{\prime \prime }-2f^{\prime 2}\left( 1-3v_{1}h^{\prime }\right) +2\left(
v_{1}h^{\prime }-1\right) &=&p_{2}Q  \label{p32} \\
-f^{\prime \prime }-2v_{1}f^{\prime }\left( 1+f^{\prime 2}\right) &=&p_{3}Q.
\label{p33}
\end{eqnarray}%
From \eqref{p30} and \eqref{p32} we derive $f'^2=A/B$,  where%
\begin{equation*}
A=\left( 2v_{1}h^{\prime }-c_{1}-2\right) h^{\prime 2}-c_{1},\quad B=c_{1}-2\left( 1-3v_{1}h^{\prime }\right) .
\end{equation*}
Let us observe that $B\not=0$ because $h''\not=0$ and $v_1\not=0$.
Similarly, from \eqref{p31} and \eqref{p33}, we have 
\begin{equation*}
2f'(v_1f'^2+2h'^2-3v_1h'^2)=c_2Q=c_{2}\left( 1+f^{\prime 2}+h^{\prime 2}\right).
\end{equation*}%
Substituting $f^{\prime }$ by $\pm \sqrt{A/B}$,  
\begin{equation*}
c_{2}\sqrt{B}\left( A+B\left( 1+h^{\prime 2}\right) \right) =\pm 2\sqrt{A}%
(Av_{1}+Bh'(2-3v_1h')).
\end{equation*}%
After squaring both sides,  we obtain%
\begin{equation*}
\sum_{n=0}^{9}A_{n}h'(x)^n=0,
\end{equation*}%
where $A_n$ are real constants. Being $A_{9}=2^{11}v_{1}^{5}\neq 0$, we arrive to a contradiction.

\item  Subcase  $v_{1}v_{2}\neq 0$. Then \eqref{pp}  writes%
\begin{eqnarray}
h^{\prime \prime }-2\left( 1-v_{1}h^{\prime }\right) \left( 1+h^{\prime
2}\right)  &=&p_{0}Q  \label{p40} \\
-f^{\prime \prime }-2f^{\prime }\left( 3v_{1}h^{\prime 2}-2h^{\prime
}+v_{1}\right) +2v_{2}\left( 1+h^{\prime 2}\right)  &=&p_{1}Q \label{p41} \\
h^{\prime \prime }-2f^{\prime 2}\left( 1-3v_{1}h^{\prime }\right)
-4v_{2}f^{\prime }h^{\prime }+2\left( v_{1}h^{\prime }-1\right)  &=&p_{2}Q 
\label{p42} \\
-f^{\prime \prime }+2\left( v_{2}-v_{1}f^{\prime }\right) \left( 1+f^{\prime
2}\right)  &=&p_{3}Q.  \label{p43}
\end{eqnarray}%
 From \eqref{p40} and \eqref{p42},
\begin{equation*}
2\left( v_{1}h^{\prime
}-1\right) h^{\prime 2}+2f^{\prime 2}\left( 1-3v_{1}h^{\prime }\right)
+4v_{2}f^{\prime }h^{\prime }=c_1 Q=c_{1}\left( 1+f^{\prime 2}+h^{\prime 2}\right),
\end{equation*}%
or equivalently%
\begin{equation*}
Af^{\prime 2}+Bf^{\prime }+C=0,
\end{equation*}%
where 
\begin{equation*}
\begin{split}
A &=c_{1}-2+6v_{1}h^{\prime },\\
B&=-4v_{2}h^{\prime }, \\
C &=c_{1}+\left( c_{1}+2-2v_{1}h^{\prime }\right) h^{\prime 2}.
\end{split}
\end{equation*}
Note that $A$ and $B$ cannot vanish because $v_{1}v_{2}\not=0$ and $h^{\prime \prime
}\neq 0$.  Then  
$$f^{\prime }=\frac{-B\pm \sqrt{B^{2}-4AC}}{2A}.$$
 On
the other hand,  \eqref{p41} and \eqref{p43} imply%
\begin{equation}\label{tt}
c_{2} Q +2 \left(f'(3 v_1  h'^2-2  h')+f'^2(v_2-v_1 f')-v_2 h'^2\right)=0.
\end{equation}
As in previous subcase, our purpose is to substitute the value of $f'$ in order to obtain a polynomial on $h'$.   Let $D=B^2-4AC$. If we write \eqref{tt} again in terms of $\sqrt{D}$, then we have a polynomial equation of type $a+b\sqrt{D}=0$, hence, $a^2-b^2D=0$. Substituting the value of $D$ as well as of $A$, $B$ and $C$, we obtain  the desired polynomial equation on $h'$, namely, 
\begin{equation*}
\sum_{n=0}^{12}A_{n}h'^n=0,
\end{equation*}%
where $A_n$ are real constants. Because $A_{12}=2^{16}3^{3}v_{1}^{8}\neq 0$,  we arrive to a contradiction,
completing the proof of theorem.
\end{enumerate}

\end{enumerate}

\end{proof}

\section{Translating solitons of homothetical type}

Let $u(x,y)=f(x)g(y)$ be a homothetical surface. Suppose that the surface is also a translating soliton with respect to $\vec{v}$. There are three initial  cases that can be previously considered.
 \begin{enumerate}
 \item Case that  $f$ or $g$ is constant. Then the surface is ruled, so we know that the surface is a grim reaper or a plane parallel to the vector $\vec{v}$. 
 \item Case that $f$ (or $g$) is linear. Indeed, if $f(x)=ax+b$ with $a,b\in\r$, $a\not=0$, then \eqref{eqh1} is a polynomial equation $\sum_{n=0}^3 A_n(y)x^n=0$. In particular all coefficients $A_n$ must vanish. The computation of $A_3$ yields  $2a^3v_2 g'^3$. Since $g$ is not constant and $a\not=0$, we deduce $v_2=0$. Now the computation of $A_2$ gives $A_2=2a^3(av_1g)g'^2$. Hence, $v_1=v_3=0$, obtaining a contradiction.
\item Case that $f$ and $f'$ (or $g$ and $g'$) are linearly dependent.  Assume that $f^{\prime }=af,$ $a\in\r$, $a\not=0$.  Then $f''=a^2f$ and \eqref{eqh1} is a  polynomial equation on $f$ of
degree $3$, namely,  $\sum_{n=0}^3 A_n(y) f^n=0$. Then all coefficients $A_n$ must vanish. In particular, $A_0=-2v_3$, hence $v_3=0$. Now $A_1=0$ and $A_3$ lead to
\begin{eqnarray*}
A_1=a\left( a+2v_{1}\right) g+2v_{2}g^{\prime }+g^{\prime \prime } &=&0  \\ 
A_3=\left( av_{1}g+v_{2}g^{\prime }\right) \left( ag^{2}+g^{\prime 2}\right)
+a^{2}\left( g^{2}g^{\prime \prime }-gg^{\prime 2}\right)  &=&0.  
\end{eqnarray*}%
The linear combination $A_1-a^2g A_3=0$  writes $-a^4g^3+(2av_1-a^2)gg'^2+2v_2g'^3=0$, or equivalently, 
$$-a^3+(2av_1-a^2)\left(\frac{g'}{g}\right)^2+2v_2\left(\frac{g'}{g}\right)^3=0.$$
Since all coefficients are not all zero, there is $b\in\r$ such that $g'/g=b$ with $b\not=0$. Taking into account that $g''=b^2g$ and that $a^2+b^2\not=0$, then $A_1=0$ is  
$$a^2+ (2(av_1+bv_2)+b^2)g=0,$$
obtaining a contradiction.
\end{enumerate}
After this discussion,  we can assume that $f'f''\not=0$ and $g'g''\not=0$ in their domains and let us introduce new variables. So, let  $p=p(f)=f'$, as well as,  $q=q(g)=g'$. Then $p'=f''/f'$ and $q''=g''/g'$. Let us observe that $pp'qq'\not=0$. Now the translating soliton equation \eqref{eqh1} is 
\begin{equation}\label{eqh2}
(1+f^2q^2)gpp'-2fgp^2q^2+(1+g^2p^2)fqq'-2(1+p^2g^2+f^2q^2)(-v_1pg-v_2fq+v_3)=0.
\end{equation}
We now give a partial result on Problem 2 in case that $\vec{v}$ is one of the canonical basis of $\r^3$.

\begin{theorem} \label{t3} Grim reapers are the only translating solitons of homothetical type when $\vec{v}$ is one vector of the canonical basis.
\end{theorem}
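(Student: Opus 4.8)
The plan is to dispose of the three canonical directions one at a time, using the symmetry of the problem to cut the work in half. First I would record that the three preliminary cases already examined in the discussion preceding the statement (one of $f,g$ constant; one of them affine; $f,f'$ or $g,g'$ proportional) yield only grim reapers or outright contradictions, so it remains to treat the \emph{generic} situation $f'f''\neq 0$, $g'g''\neq 0$, in which \eqref{eqh2} is available with $p=f'$ and $q=g'$ viewed as functions of $f$ and $g$ respectively, and $pp'qq'\neq 0$. Next I would observe that \eqref{eqh1} is invariant under the simultaneous interchange $f\leftrightarrow g$, $v_1\leftrightarrow v_2$: the factor $1+f'^2g^2+f^2g'^2$ is symmetric, the two second-order terms $(1+f^2g'^2)gf''$ and $(1+g^2f'^2)fg''$ are exchanged, and $-v_1f'g-v_2fg'+v_3$ is preserved. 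Hence the case $\vec v=(0,1,0)$ reduces to $\vec v=(1,0,0)$, while $\vec v=(0,0,1)$ is precisely the case already settled in \cite{Lopez3}. Thus the whole theorem comes down to excluding generic solutions when $\vec v=(1,0,0)$.

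For $\vec v=(1,0,0)$ I would set $v_1=1$, $v_2=v_3=0$ in \eqref{eqh2} and rewrite the equation as a separated sum
\[ \sum_i A_i(f)\,B_i(g)=0, \]
in which the only functions of $g$ that occur are $g,\ gq^2,\ qq',\ g^2qq',\ g^3$, with coefficients $A_i$ built explicitly from $f,p,p'$; crucially, the coefficient of $qq'$ is simply $A_3=f$. The idea is to peel off the $g$-factors by differentiating in $f$, an operation that leaves every $B_i$ untouched and acts only on the $A_i$. Because $A_3=f$, the operator $L=f\,\partial_f-\mathrm{id}$ annihilates the $qq'$ term, leaving four terms; dividing by $g$ and iterating this differentiate-and-eliminate step removes the remaining $g$-factors one by one. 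Carried far enough, the procedure should collapse to a single functional identity relating the $f$-data $(f,p,p')$ to the $g$-data $(g,q,q')$, whose left-hand side is a polynomial in $q$ with coefficients that are functions of $f$ alone. Since these coefficients depend only on $f$ while the powers of $q$ depend only on $g$, I would argue that either the surviving coefficient of the top power of $q$ is a nonzero numerical multiple of a power of $v_1$ — exactly the behaviour of the leading coefficients $A_9=-512$ and $A_{12}=2^{16}3^3v_1^8$ encountered in the proof of Theorem~\ref{t2} — giving an immediate contradiction, or else a nontrivial linear relation among the powers of $q$ forces $q$ into a constant, affine, or exponential profile in $g$. Each of these last possibilities is already excluded by the preliminary cases, so in every branch $f$ or $g$ must be constant and the surface is a grim reaper.

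The delicate point, and the one I expect to be the main obstacle, is that \eqref{eqh2} entangles the explicit powers of $f$ and $g$ with the unknown profiles $p(f)$ and $q(g)$, so one cannot simply read off polynomial coefficients as in the affine preliminary cases. The entire argument therefore hinges on the separation $\sum_i A_i(f)B_i(g)=0$ together with the clean coefficient $A_3=f$, which makes the first elimination exact; the real difficulty is to carry the subsequent eliminations without the derived coefficients degenerating, and to confirm that the final leading coefficient is genuinely nonzero rather than collapsing through cancellation. In particular one must rule out spurious power-law candidates for $q$ by feeding the constraint back into the explicit $f$-side coefficients (which are rigidly determined by $p$ and $p'$), so that the $f$-side and $g$-side conditions together over-determine the system. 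A routine but necessary side check is that every division performed (by $f$, by $g$, and by the intermediate $A_i$) is legitimate on the relevant subintervals, which is guaranteed by the standing hypotheses $pp'\neq 0$ and $qq'\neq 0$.
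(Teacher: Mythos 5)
Your opening moves coincide with the paper's: the three preliminary cases, the reduction via the $f\leftrightarrow g$, $v_1\leftrightarrow v_2$ symmetry of \eqref{eqh1} to a single non-vertical velocity, and the plan of writing \eqref{eqh2} as a separated sum $\sum_i A_i(f)B_i(g)=0$ and killing terms by successive differentiation. The paper realizes this concretely in a cleaner normalization: it divides \eqref{eqh22} by $fgp^2q^2$, differentiates once in $f$ and once in $g$, divides by $2\left(\frac{1}{p^2}\right)'\left(\frac{1}{q^2}\right)'$, and differentiates again, so that the equation collapses to the product \eqref{eqh3} of a pure $f$-factor and a pure $g$-factor, one of which must therefore vanish. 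Up to this point your outline is sound and is essentially the same elimination.

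The gap is in your endgame. You predict that the surviving branches either exhibit a nonzero numerical leading coefficient (immediate contradiction) or force $q$ into a constant, affine, or exponential profile already excluded by the preliminary cases. Neither is what happens. The vanishing of a factor of \eqref{eqh3} is a genuine ODE whose integration produces \emph{new} candidate profiles: in one branch $p^2=kf^2-ak$, and the residual coefficient condition $B_3=0$ integrates to $q(g)=g\sqrt{k}\,\tan\left(m-\tfrac{2}{\sqrt{k}}\log g\right)$, which is none of constant, affine, or exponential; in the other branch one gets the implicit relation $g=q^3/(a+kq^2)$. Each of these must then be substituted back into the full equation \eqref{eqh22} and excluded by explicit computation — in the paper, via the incompatibility of the remaining coefficient $B_1=0$ with the tangent profile in the first branch, and via the nonvanishing of $C_0=2a^4f$ in the second. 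So the hard half of the proof — integrating the ODEs that survive the elimination and deriving contradictions from the residual coefficients — is absent from your proposal, and your stated expectation of how those branches resolve is incorrect: they cannot be discharged by appeal to the preliminary cases. You rightly flagged this step as the main obstacle; it is, and it requires the explicit case analysis.
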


\begin{proof} We know that the case $\vec{v}=(0,0,1)$ was solved in \cite{Lopez3}. It remains the case that $\vec{v}$ is $(1,0,0)$ or $(0,1,0)$. By the symmetry of Equation \eqref{eqh2} with respect to $v_1$ and $v_2$, it suffices to consider the case that $\vec{v}=(0,1,0)$. Then 
\begin{equation}\label{eqh22}
(1+f^2q^2)gpp'-2fgp^2q^2+(1+g^2p^2)fqq'+2fq(1+p^2g^2+f^2q^2)=0.
\end{equation}
We divide by $fgp^2q^2$,
$$
\frac{1}{q^{2}}\left( \frac{p^{\prime }}{fp}\right) +\frac{1}{p^{2}}\left(\frac{q^{\prime
}}{gq}\right) +\frac{1}{q}\left( gq^{\prime }-q\right) +\frac{1}{p}\left(
fp^{\prime }-p\right)   +2\left(
\frac{1}{p^{2}gq}+\frac{g}{q}+\frac{f^{2}q}{p^{2}g}\right)  =0.  
$$
Differentiating with respect to $f$ and $g$ successively, we obtain%
$$
\left( \frac{1}{q^{2}}\right) ^{\prime }\left( \frac{p^{\prime }}{fp}\right) ^{\prime
}+\left( \frac{1}{p^{2}}\right) ^{\prime }\left( \frac{q^{\prime }}{gq}\right) ^{\prime } +2\left[ \left( \frac{1}{p^{2}}\right) ^{\prime }\left( \frac{1}{gq}\right) ^{\prime
}+\left( \frac{f^{2}}{p^{2}}\right) ^{\prime }\left( \frac{q}{g}\right) ^{\prime }\right]    =0. 
$$
Notice that $\left( \frac{1}{p^{2}}\right) ^{\prime }\left( \frac{1}{q^{2}}\right) ^{\prime
}\neq 0$. Dividing by $2\left( \frac{1}{p^{2}}\right) ^{\prime }\left(
\frac{1}{q^{2}}\right) ^{\prime }$ and next differentiating with respect to $f$ and $g$
successively, 
\begin{equation}\label{eqh3}\left( \dfrac{%
\left( \dfrac{f^{2}}{p^{2}}\right) ^{\prime }}{\left( \dfrac{1}{p^2}\right) ^{\prime }}%
\right) ^{\prime }\left( \dfrac{\left( \dfrac{q}{g}\right) ^{\prime }}{%
\left( \dfrac{1}{q^2}\right) ^{\prime }}\right) ^{\prime }   =0.   
\end{equation}%

\begin{enumerate}
\item Case   
$$\left(\frac{f^2}{p^2}\right)'=a\left(\frac{1}{p^2}\right)'$$
for some $a\not=0$. Integrating we have   $p^2=k f^2-a k$ for some $k\not=0$. Differentiating with respect to $f$, we deduce $pp'=k f$. Substituting into \eqref{eqh22}, we have a polynomial equation $B_1(g)f+B_3(g)f^3=0$. Thus $B_1=B_3=0$. The computation of these coefficients yield
\begin{eqnarray*}
B_1&=&-q \left(a g^2 k-1\right) \left(q'+2\right)+2 a g k q^2+g k,\\
B_3&=&q \left(g^2 k \left(q'+2\right)-g k q+2 q^2\right).
\end{eqnarray*}
Equation $B_3=0$ can be solved explicitly. Suppose $k>0$ (an analogous  argument  if $k<0$). Then
$$q(g)=g \sqrt{k} \tan \left(m-\frac{2}{\sqrt{k}} \log (g)\right),\quad m\in\r.$$
Substituting into $B_1=0$, we conclude
$$2 \left(a g^2 k-1\right) \tan ^3\left(m-\frac{2 \log (g)}{\sqrt{k}}\right)+\sqrt{k} \left(a g^2 k+1\right) \tan ^2\left(m-\frac{2 \log (g)}{\sqrt{k}}\right)+\sqrt{k}=0,$$
obtaining a contradiction.

\item   Case
$$\left( \dfrac{q}{g}\right) ^{\prime }=a\left( \dfrac{1}{q^{2}}\right)
^{\prime }$$
for some $a\not=0$. Integrating, 
$$g=\frac{q^{3}}{a+kq^{2}}, $$
for some constant $k\in\r$.  Differentiating with respect to $g$,
$$q^{\prime }=\frac{q^{2}\left( 3a+kq^{2}\right) }{\left( a+kq^{2}\right) ^{2}}.$$
By substituting these values of $g$ and $q$ in \eqref{eqh22},  we obtain  
\begin{equation*}
\frac{q}{(a+kq^2)^4}\sum_{n=0}^{10}C_{n}(x)q^{n}=0.
\end{equation*}%
Then all coefficients $C_n$ must vanish. However, the computation of $C_0$ gives $C_{0}=2a^{4}f$ which is not possible.  
\end{enumerate}
\end{proof}

\section{Extension of the results to the Lorentzian setting}\label{s4}

In this last section, we extend the results to the Lorentz-Minkowski 3-space 
$\mathbb{R}_{1}^{3}$. Here $\r_1^3$ is the affine space $\r^3$ endowed with the canonical Lorentzian metric $%
dx^{2}+dy^{2}-dz^{2}$.  Denote $\left\langle \cdot ,\cdot \right\rangle _{L}$
and $\times _{L}$ the Lorentzian inner and cross product, respectively.

We first consider a (non-planar) non-degenerate cylindrical surface $X(s,t)=\alpha (s)+t%
w$ where $\alpha =\alpha (s) $ is parametrized by the arc length $s$ and $w\in \mathbb{R}_{1}^{3}$.  The unit normal vector $N$ is parallel to $\alpha
^{\prime }(s)\times _{L}w$ and hence \eqref{eq1} writes  
\begin{equation}\label{38}
\left\langle w,w\right\rangle _{L}\left\langle \alpha'\times_L w,\alpha''\right\rangle _{L}=2\epsilon
\left( \epsilon _{1}\left\langle w,w\right\rangle _{L}-\langle
\alpha ^{\prime }(s),w\rangle _{L}^{2}\right) \left\langle \alpha
^{\prime }(s)\times_L w,\vec{v}\right\rangle _{L},   
\end{equation}%
 where $\epsilon $ is the sign of $\left\langle \alpha ^{\prime }(s)\times_L 
w,\alpha ^{\prime }(s)\times_L w\right\rangle _{L}$ and $\epsilon
_{1}=\left\langle \alpha ^{\prime }(s),\alpha ^{\prime }(s)\right\rangle
_{L}$.  In case that the rulings are lightlike, the surface is  a translating soliton if    $\langle \alpha
^{\prime }(s)\times_L w,\vec{v}\rangle _{L}=0$.  In particular, this equation holds if $\vec{v}$ is parallel to $w$ being $\alpha$ is an arbitrary curve. 

In all Lorentzian versions of the results, we will conclude that the surface is a cylindrical surface. According to the causal character of the rulings, the description of the translating solitons of $\r_1^3$ of cylindrical type is the following (\cite{al}).  After a rigid motion of $\r_1^3$, we can fix $w$.
\begin{enumerate}
\item Spacelike rulings. Let $w=(1,0,0)$. If $X(s,t)=(0,s,u(s))+tw$, then \eqref{eq1}
$$u''=\left\{\begin{array}{ll}
2(1-u'^2)(v_2 u'-v_3), & 1-u'^2>0\\
-2(1-u'^2)(v_2 u'-v_3), & 1-u'^2<0.
\end{array}\right.$$
For example, if $\vec{v}=(0,0,1)$, the rulings are orthogonal to $\vec{v}$ and the integration of both equations give 
$$u(s)=\left\{\begin{array}{ll}
-\frac12\log(\cosh(-2s+a))+b,& 1-u'^2>0\\
\frac12\log(\sinh(2s+a))+b,& 1-u'^2<0,
\end{array}\right.$$
where $a,b\in\r$. These two curves appeared in \cite{castro}. 
\item Timelike rulings. Let $w=(0,0,1)$. If $X(s,t)=(s,u(s),0)+tw$, then the surface is timelike and Equation \eqref{eq1} is 
$$u''=2(1+u'^2)(v_1 u'-v_2).$$
If   $\vec{v}=(0,1,0)$, the rulings are orthogonal to $\vec{v}$ and the solution is $u(s)=\log(\cos (2s+a))/2+b$, $a,b\in\r$. 
 \item Lightlike rulings. Then $H=0$, so the translating equation \eqref{eq1} is $\langle N,\vec{v}\rangle=0$.  Let $w=(1,0,1)$ and 
 $X(s,t)=(u(s),s,-u(s))+tw$. The surface is not degenerated if $u'\not=0$. Then   Equation \eqref{eq1} is 
 $$v_1-2v_2u'-v_3=0.$$
 If $v_2=0$, then $\vec{v}$ is parallel to $w$ and with arbitrary generating curve.  Otherwise the function $u$ is linear and $X(s,t)$ is a plane.  
 \end{enumerate}

Summarizing, the cylindrical translating solitons in $\r_1^3$  are planes (when the rulings are parallel to $\vec{v}$), Lorentzian grim reapers and cylindrical surfaces whose rulings are lightlike and parallel to $\vec{v}$.

As we have pointed out,   the extensions of Theorems \ref{t1}, \ref{t2} and \ref{t3}  to the Lorentzian setting are straightforward and the conclusion is that the surfaces must be cylindrical surfaces.  

\begin{theorem}\label{t5} A translating soliton in $\mathbb{R}_{1}^{3}$   that is the sum of two curves and where one of the generating curve is planar  must be a cylindrical surface.  
\end{theorem}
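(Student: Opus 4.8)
The plan is to mirror the Euclidean proof of Theorem \ref{t2}, adapting every algebraic step to the Lorentzian inner product $\langle\cdot,\cdot\rangle_L=dx^2+dy^2-dz^2$. I set up the surface exactly as in the Euclidean case: after arguing that the planar generating curve $\beta$ may be taken in a coordinate plane (say $x=0$) and the non-planar curve $\alpha$ as a graph over the $x$-axis, I parametrize $X(x,y)=(x,\,y+f(x),\,h(x)+g(y))$. The only genuine change at this stage is that I must first fix the causal character of the surface so that $H$ and $N$ are defined; I would compute the induced metric and split into the non-degenerate cases where the surface is spacelike or timelike, noting that the sign factors $\epsilon,\epsilon_1$ appearing in the cylindrical analysis (cf.\ Equation \eqref{38}) only alter the mean curvature and normal by overall signs.

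Next I would write down the Lorentzian analogue of the translating soliton equation \eqref{eq22}. Because the metric differs from the Euclidean one only by the sign of $dz^2$, the effect on the quantities $Q=1+f'^2+h'^2$ and on the denominators is a controlled sign change in the terms carrying the $z$-derivatives $h',g'$; structurally the resulting PDE is the same quasilinear relation between $f,h,g$ and their derivatives, with $v_3$-terms picking up the Lorentzian sign. Dividing by the Lorentzian version of $Q$ and differentiating with respect to $x$ yields, exactly as before, a polynomial identity $\sum_{n=0}^3 P_n(x)g'^n=0$, forcing each $P_n$ to be constant multiples of $Q$. This produces the four equations analogous to \eqref{pp}, and I then run the same case analysis on $\vec v=(v_1,v_2,v_3)$, invoking the Lorentzian counterparts of Claim 1 and Claim 2 (the Wronskian computation in Claim 2 is purely formal and survives the sign change).

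The main obstacle, and the reason this is worth stating as a separate theorem, is bookkeeping of the Lorentzian signs through the leading-coefficient computations that deliver the contradictions (the nonvanishing $A_9=-512$, $A_9=2^{11}v_1^5$, $A_{12}=2^{16}3^3 v_1^8$, etc.). I expect these leading coefficients to remain nonzero up to sign, since they are governed by the highest-order terms in $f'$ or $h'$ where the metric sign enters uniformly; the delicate point is verifying that no previously nonzero coefficient is forced to vanish by a cancellation peculiar to the Lorentzian signature. I would therefore check each subcase's top-degree coefficient explicitly rather than appeal to symmetry with the Euclidean proof.

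Finally, the conclusion differs from Theorem \ref{t2}: instead of terminating at ``plane or grim reaper,'' the contradiction in every non-cylindrical case shows $\alpha$ must be planar, whence $\beta$ planar reduces us to the two-planar-curves situation, and the Lorentzian analogue of Theorem \ref{t1} (with its cylindrical conclusion, now including the lightlike-ruled solitons parallel to $\vec v$ described above) forces the surface to be cylindrical. I would state and prove that two-planar-curve Lorentzian version as a preliminary lemma, following the same divide-and-differentiate scheme used for Theorem \ref{t1}, so that Theorem \ref{t5} closes exactly as its Euclidean counterpart does.
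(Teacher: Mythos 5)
Your proposal coincides with the paper's treatment: the paper offers no separate argument for Theorem \ref{t5}, stating only that the proofs of Theorems \ref{t1} and \ref{t2} extend to $\mathbb{R}_1^3$ in a straightforward way, with the conclusion weakened to ``cylindrical surface'' so as to accommodate the lightlike-ruled solitons. Your plan --- rerun the divide-and-differentiate scheme with the Lorentzian sign on the $z$-terms after splitting by causal character, re-verify the nonvanishing top-degree coefficients in each subcase, and first settle the two-planar-curves lemma --- is exactly that route, worked out in more detail than the paper itself supplies.
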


\begin{theorem}
\label{t6} A   translating
soliton in $\mathbb{R}_{1}^{3}$ of homothetical type when $\vec{v}$ is one
vector of the canonical basis must be a cylindrical surface.
\end{theorem}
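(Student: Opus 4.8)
The plan is to mirror the proof of Theorem \ref{t3} step by step, replacing the Euclidean mean curvature operator by its Lorentzian counterpart and carefully tracking the sign changes forced by the metric $dx^{2}+dy^{2}-dz^{2}$. Writing the surface as the graph $z=f(x)g(y)$ and computing the unit normal and the mean curvature with respect to $\langle\cdot,\cdot\rangle_L$, equation \eqref{eq1} becomes the Lorentzian analogue of \eqref{eqh1}: schematically $(1-f^{2}g'^{2})gf''+2fgf'^{2}g'^{2}+(1-g^{2}f'^{2})fg''-2\epsilon(1-f'^{2}g^{2}-f^{2}g'^{2})(-v_1f'g-v_2fg'+v_3)=0$, where $\epsilon=\pm1$ records the causal character of the region. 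The essential features survive: the equation is quasilinear in $f''$ and $g''$, polynomial in $f,f',g,g'$, and still symmetric under $f\leftrightarrow g$, $v_1\leftrightarrow v_2$.

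First I would dispose of the same three preliminary cases treated at the beginning of Section 3. If $f$ or $g$ is constant the surface is cylindrical, which is precisely the desired conclusion; by the classification recalled at the start of this section it is then a plane, a Lorentzian grim reaper, or a lightlike-ruled cylinder. If $f$ (or $g$) is linear, or if $f$ and $f'$ are linearly dependent, the equation reduces to a polynomial identity in $x$ (resp. in $f$) whose highest coefficients are, up to sign, the same monomials in $v_1,v_2,v_3$ as in the Euclidean argument; forcing them to vanish yields $\vec{v}=0$, a contradiction. Hence I may assume $f'f''\neq0$ and $g'g''\neq0$ and pass to the variables $p=p(f)=f'$, $q=q(g)=g'$, obtaining the Lorentzian form of \eqref{eqh2}.

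Next, since $\vec{v}$ is a canonical vector, the $v_1\leftrightarrow v_2$ symmetry reduces the two spacelike directions $e_1,e_2$ to a single case, say $\vec{v}=(0,1,0)$, while the timelike direction $\vec{v}=(0,0,1)$ is handled separately. In each case I would divide by the appropriate monomial in $f,g,p,q$, differentiate successively with respect to $f$ and to $g$ to annihilate the additive pieces, and divide once more, arriving at the Lorentzian counterpart of the factored identity \eqref{eqh3}: a product of a function of $f$ (through $p$) and a function of $g$ (through $q$) which must vanish. This forces one factor to be zero, giving an explicitly integrable first-order ODE for $p$ or $q$; substituting its closed form back into the full equation produces a polynomial identity in one of $f,g,p,q$ whose top coefficient is a nonzero constant times a power of the relevant $v_i$, the contradiction that excludes every non-cylindrical possibility.

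The main obstacle is bookkeeping rather than conceptual: one must verify that the sign flips coming from the Lorentzian metric, together with the $\epsilon=\pm1$ in front of the curvature term, do not cause any of the decisive leading coefficients (the analogues of $C_0=2a^{4}f$ and of $A_9,A_{12}$ in Theorem \ref{t3}) to vanish. Since on each connected region $\epsilon$ is a fixed constant entering only as an overall factor, it alters neither the polynomial degree nor the vanishing pattern of the top coefficients; consequently the computation runs exactly as in the Euclidean case and the surface must be cylindrical, proving Theorem \ref{t6}.
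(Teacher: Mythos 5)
Your proposal follows exactly the route the paper intends: the paper gives no separate proof of Theorem \ref{t6}, stating only that the extension of Theorem \ref{t3} to $\mathbb{R}_1^3$ is straightforward, i.e., one repeats the Euclidean computation with the Lorentzian sign changes and checks that the decisive leading coefficients of the resulting polynomial identities remain nonzero. Your outline is precisely that argument, including the correct observation that the case of constant $f$ or $g$ now yields the stated conclusion (a cylindrical surface, classified at the start of Section \ref{s4}) rather than only a grim reaper.
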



\section*{Declarations}

\noindent{\bf Funding.} Rafael L\'opez has been partially supported by the grant no. MTM2017-89677-P, MINECO/ AEI/FEDER, UE.

\noindent{\bf Code availability.}  Not applicable.

\noindent{\bf Contributions.} The authors equally conceived of the study, participated in its design and coordination, drafted the manuscript, participated in the sequence alignment, and read and approved the final manuscript.

\noindent{\bf  Conflict of interest.} The authors declare that there is no conflict of interest.

\end{document}